\newtheorem{thm}{Theorem}
\newtheorem{lem}[thm]{Lemma}
\newtheorem{cor}[thm]{Corollary}
\theoremstyle{definition}
\newtheorem{defn}[thm]{Definition}
\newtheorem{ex}[thm]{Example}
\newtheorem{rem}[thm]{Remark}
\theoremstyle{remark}
\newcommand{\gauss}[3]{\genfrac{[}{]}{0pt}{}{#1}{#2}_{#3}}
\DeclareMathOperator{\Der}{Der}
\DeclareMathOperator{\Res}{Res}
\DeclareMathOperator{\Red}{Red}
\DeclareMathOperator{\GF}{GF}
\DeclareMathOperator{\LS}{LS}
\DeclareMathOperator{\PGammaL}{P\Gamma L}
\title{Derived and Residual Subspace Designs}
\author{Michael Kiermaier}
\address{Michael Kiermaier\\Mathematisches Institut\\Universität Bayreuth\\D-95440 Bayreuth\\Germany} 
\email{michael.kiermaier@uni-bayreuth.de} 
\urladdr{http://www.mathe2.uni-bayreuth.de/michaelk/}
\author{Reinhard Laue}
\address{Reinhard Laue\\Institut für Informatik\\Universität Bayreuth\\D-95440 Bayreuth\\Germany} 
\email{laue@uni-bayreuth.de} 
\date{\today}
\subjclass[2010]{Primary 51E20; Secondary 05B05, 05B25, 11Txx}
\keywords{$q$-analog, combinatorial design, subspace design, derived design, residual design, large set}
\begin{document}

\begin{abstract}
A generalization of forming derived and residual designs from $t$-designs to subspace designs is proposed.
A $q$-analog of a theorem by Van Trung, van Leijenhorst and Driessen is proven, stating that if for some (not necessarily realizable) parameter set the derived and residual parameter set are realizable, the same is true for the reduced parameter set.

As a result, we get the existence of several previously unknown subspace designs.
Some consequences are derived for the existence of large sets of subspace designs.
Furthermore, it is shown that there is no $q$-analog of the large Witt design.
\end{abstract}

\maketitle

\section{Introduction}
\subsection{History}
Let $V$ be a $v$-dimensional vector space over a finite field $\GF(q)$.
A $t$-$(v,k,\lambda)_q$ subspace design $\mathcal{D} = (V,{\mathcal B})$
consists of a set $\mathcal B$ of $k$-dimensional subspaces,
called blocks, such that each $t$-dimensional subspace of
$V$ lies in exactly $\lambda$ blocks.
This notion is a vector space analog of ordinary set-theoretic $t$-designs.
For that reason, subspace designs are also called $q$-analogs of designs.

The first reference about subspace designs appears to be \cite{Cameron74}, and the first actual subspace designs with $t\geq 2$ have been constructed in \cite{Thomas87}.
An introduction to subspace designs can be found in \cite[Day~4]{suzuki-fivedays}.

There has been a growing interest in subspace designs, recently.
New subspace designs for $t = 2$ and $t = 3$ have been constructed in \cite{BKL05, MBraun05, SBraun10, MBraun11}.
A major success was the discovery of a $2$-$(13,3,1)_2$ subspace design in \cite{BEOVW13}, which is the first $q$-analog of a Steiner system, and the $q$-analog to Teirlinck's theorem \cite{T}, stating that simple $t$-subspace designs exist for every value of $t$ \cite{FLV}.

Based on these results, it is natural to investigate further concepts in classical set-theoretic design theory for their applicability in the $q$-analog case.
In \cite{Kiermaier-Pavcevic}, intersection numbers for subspace designs are given.
In this article, we consider the fundamental constructions of derived and residual designs.

\subsection{Overview}
In the case of a set-theoretic $t$-$(v,k,\lambda)$ design ${\mathcal D} = (V,{\mathcal B})$ 
a point $x\in V$ is fixed and the blocks fall into classes of those
that contain or avoid $x$:
\begin{align*}
    \Der_x({\mathcal D}) & = (V\setminus \{x\},\{B\setminus \{x\}: x\in B \in {\mathcal B}\}) \\
    \Res_x({\mathcal D}) & = (V\setminus \{x\},\{B \in \mathcal{B} : x\not\in B\})
\end{align*}
Here, the derived design $\Der_x({\mathcal D})$ is a $(t-1)$-$(v-1,k-1,\lambda)$ design and the residual design $\Res_x({\mathcal D})$ is a $(t-1)$-$(v-1,k,\mu)$ design, where $\mu = \lambda\cdot \frac{v-t}{k-t+1}$.

While the above definition of the derived design is translated directly to the $q$-analog case, for the translation of the residual design we will start from the equivalent description
\[
	\Res_S(\mathcal{D}) = (S, \{B\in\mathcal{B} : B \subseteq S\})
\]
where $S$ is a $(v-1)$-subset of $V$, see Definition~\ref{def:der_res}.

It is worth mentioning that not all concepts of set-theoretic design theory do have a straightforward $q$-analog.
While the existence of a $q$-analog of the Fano plane is still an important unsolved problem, in Example~\ref{ex:witt} it will be shown that there is there is no $q$-analog of the large Witt design.

In set-theoretic design theory, there is a theorem found independently by Van Trung \cite{tvt}, van Leijenhorst \cite{vL}, and Driessen \cite{D}, stating that for any two set-theoretic designs with the parameters of the derived and the residual design of some (not necessarily realizable) parameter set, there is a design with the parameters of the reduced design.%
\footnote{We remark that in \cite{tvt,vL,D}, the involved designs are addressed by their numerical parameters.
However, there is no interpretation in terms of the derived, residual and reduced design parameters.}
In Theorem~\ref{tvtq}, a $q$-analog of this theorem will be given.
As an application, in Corollary~\ref{cor:new_designs} we get the hitherto unknown existence of subspace designs with the parameters
\[
    2\text{-}(8,4,\lambda)_2
    \qquad\text{where}\qquad
    \lambda\in\{63,84,147,168,189,252,273,294\}
\]
and
\[
    2\text{-}(10,4,\lambda)_2
    \qquad\text{where}\qquad
    \lambda\in\{1785, 1870, 3570, 3655, 5355\}\text{.}
\]
In Corollary~\ref{cor:ls}, the application of Theorem~\ref{tvtq} yields a $q$-analog of \cite[Lemma~4]{AN-K-Halvings}, which is a construction method for new large sets from known ones.

\section{Preliminaries}
\subsection{The subspace lattice}
In the following, we fix a prime power $q$ and a vector space $V$ over $\GF(q)$ of finite dimension $v$.
The lattice of all subspaces of $V$ will be denoted by $\mathcal{L}(V)$.
The set of all subspaces of $V$ of a fixed dimension $k$ is known as the \emph{Graßmannian} and will be denoted by $\gauss{V}{k}{q}$.
For simplicity, its elements will be called \emph{$k$-subspaces}.
The cardinality of $\gauss{V}{k}{q}$ is the Gaussian binomial coefficient
\[
\gauss{v}{k}{q}
= \prod_{i = 0}^{k-1} \frac{q^{v - i} - 1}{q^{i+1} - 1}
= \begin{cases}
     0 & \text{if } k > v\text{,} \\
	  \frac{(q^n - 1)(q^{n-1} - 1)\cdot\ldots\cdot(q^{n - k + 1} - 1)}{(q-1)(q^2 - 1)\cdot\ldots\cdot (q^k - 1)} & \text{otherwise.}
\end{cases}
\]
Because of
\[
	\lim_{q\to 1}\gauss{v}{k}{q} = \binom{v}{k}\text{,}
\]
the Gaussian binomial coefficients are considered the $q$-analogs of the binomial coefficients, see \cite{GoldmanRota70}.
Many identities for binomial coefficients have $q$-analogs for the Gaussian binomial coefficients.
We make use of the following identities
\[
	\gauss{n}{k}{q} = \gauss{n}{n-k}{q}
	\qquad\text{and}\qquad
	\gauss{n}{h}{q}\gauss{n-h}{k}{q} = \gauss{n}{k}{q} \gauss{n-k}{h}{q}
\]
and the $q$-Pascal triangle identities ($n \geq 1$)
\[
	\gauss{n}{k}{q} = \gauss{n-1}{k-1}{q} + q^k\gauss{n-1}{k}{q} = q^{n-k}\gauss{n-1}{k-1}{q} + \gauss{n-1}{k}{q}\text{.}
\]

\subsection{Subspace designs}
\begin{defn}
A pair $(V,\mathcal{B})$ with $\mathcal{B} \subseteq \gauss{V}{k}{q}$ is called a \emph{$t$-$(v,k,\lambda)_q$ (subspace) design}, if for each $T\in\gauss{V}{t}{q}$ there are exactly $\lambda$ elements of $\mathcal{B}$ containing $T$.
\end{defn}

According to a statement of Tits \cite{Tits}, combinatorics on sets can be seen as the limit case $q\to 1$ of combinatorics on vector spaces over $\GF(q)$, see also~\cite{Cohn}.
Indeed, many statements about subspace designs become true statements about set-theoretic designs when setting $q = 1$, and replacing notions about vector spaces by their set-theoretic counterpart.%
\footnote{Gaussian binomial coefficients are replaced by ordinary binomial coefficients, vector spaces and subspaces are replaced by sets and subsets, the dimension is replaced by the cardinality, etc.}

The following fact is the $q$-analog of a well-known property of block designs, which can be easily proven by a double counting argument:

\begin{lem}[{\cite[Lemma~4.1(1)]{Suzuki-1990}}]
\label{lem:lambda_s}
Let $D$ be a $t$-$(v,k,\lambda)_q$ design.
For each $s\in\{0,\ldots,t\}$, $D$ is a $s$-$(v,k,\lambda_s)_q$ design with
\[
\lambda_s = \lambda\cdot\frac{\gauss{v-s}{t-s}{q}}{\gauss{k-s}{t-s}{q}} = \lambda\cdot\frac{\gauss{v-s}{k-s}{q}}{\gauss{v-t}{k-t}{q}}\text{.}
\]
In particular, the number of blocks of $D$ is
\[
    \lambda_0
    = \lambda\cdot\frac{\gauss{v}{t}{q}}{\gauss{k}{t}{q}}
    = \lambda\cdot\frac{\gauss{v}{k}{q}}{\gauss{v-t}{k-t}{q}}\text{.}
\]
\end{lem}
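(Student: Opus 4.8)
The plan is to establish the statement by the $q$-analog of the classical double counting argument. Fix an arbitrary $S \in \gauss{V}{s}{q}$ and write $\lambda_s(S)$ for the number of blocks $B \in \mathcal{B}$ with $S \subseteq B$; the claim is exactly that $\lambda_s(S)$ is independent of the choice of $S$ and equals the stated value. To compute it, I would count the set of flags
\[
\{(T,B) : T \in \gauss{V}{t}{q},\ B \in \mathcal{B},\ S \subseteq T \subseteq B\}
\]
in two different ways.

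First, choosing $T$ before $B$: the $t$-subspaces $T$ with $S \subseteq T$ are in bijection with the $(t-s)$-subspaces of the quotient $V/S$, so there are $\gauss{v-s}{t-s}{q}$ of them, and each is contained in exactly $\lambda$ blocks because $D$ is a $t$-design (every such block then automatically contains $S$). This yields $\lambda \cdot \gauss{v-s}{t-s}{q}$ flags. Second, choosing $B$ before $T$: each of the $\lambda_s(S)$ blocks $B \supseteq S$ contains $\gauss{k-s}{t-s}{q}$ intermediate $t$-subspaces $T$ with $S \subseteq T \subseteq B$, since these correspond to the $(t-s)$-subspaces of $B/S$ and $\dim(B/S) = k-s$. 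Equating the two counts gives
\[
\lambda_s(S) \cdot \gauss{k-s}{t-s}{q} = \lambda \cdot \gauss{v-s}{t-s}{q}\text{.}
\]
The right-hand side does not depend on $S$, so neither does $\lambda_s(S)$, and dividing produces the first expression for $\lambda_s$.

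It then remains to identify the two formulas and the special case. The equality of the two expressions for $\lambda_s$ amounts to
\[
\gauss{v-s}{k-s}{q}\gauss{k-s}{t-s}{q} = \gauss{v-s}{t-s}{q}\gauss{v-t}{k-t}{q}\text{,}
\]
which is the $q$-analog of the ``subset of a subset'' identity $\binom{a}{b}\binom{b}{c} = \binom{a}{c}\binom{a-c}{b-c}$ and follows from the $q$-binomial identities recorded in the preliminaries (via the substitution $a = v-s$, $b = k-s$, $c = t-s$, together with the symmetry $\gauss{n}{k}{q} = \gauss{n}{n-k}{q}$); the two formulas for $\lambda_0$ are then just the case $s = 0$. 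I expect no genuine obstacle, as the reasoning is a direct transcription of the set-theoretic proof. The only step demanding care is the counting of the intermediate subspaces: one must replace the set-theoretic subset counts by the sizes of the intervals $[S,V]$ and $[S,B]$ in the subspace lattice, using the isomorphisms $[S,V] \cong \mathcal{L}(V/S)$ and $[S,B] \cong \mathcal{L}(B/S)$, which is precisely the mechanism that turns ordinary binomials into Gaussian binomial coefficients.
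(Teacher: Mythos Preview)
Your argument is correct and is precisely the double counting argument the paper alludes to; the paper itself does not spell out a proof but merely cites \cite{Suzuki-1990} and remarks that it ``can be easily proven by a double counting argument,'' which is exactly what you have done. The identification of the two formulas via the identity $\gauss{a}{b}{q}\gauss{b}{c}{q} = \gauss{a}{c}{q}\gauss{a-c}{b-c}{q}$ is also fine, and this identity is indeed a reformulation of the relation $\gauss{n}{h}{q}\gauss{n-h}{k}{q} = \gauss{n}{k}{q}\gauss{n-k}{h}{q}$ recorded in the preliminaries (take $n=a$, $h=c$, $k=b-c$).
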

In the situation $s = t-1$, the resulting subspace design is called the \emph{reduced} design.

For the existence of a $t$-$(v,k,\lambda)_q$ design, necessarily the \emph{integrality conditions} $\lambda_s \in\mathbb{Z}$ must be satisfied for all $s\in\{0,\ldots,t\}$.
If this is the case, we call the parameter set $t$-$(v,k,\lambda)_q$ \emph{admissible}, without requiring that the parameter set is \emph{realizable}, meaning that a $t$-$(v,k,\lambda)_q$ design actually exists.

As an example, the parameter set $2$-$(7,3,1)_q$ ($q$-analog of the Fano plane) and the parameter set $5$-$(12,6,1)_q$ ($q$-analog of the small Witt design) are admissible for any prime power $q$.
However, the question for the realizability of these parameter set is open for all values of $q$.

\begin{ex}
\label{ex:3-22_6_1}
We consider the parameters $3$-$(22,6,1)_q$.
Denoting the $n$-th cyclotomic polynomial by $\Phi_n\in\mathbb{Z}[X]$,
the integrality conditions yield that
\begin{multline*}
	\lambda_0
	= \frac{\gauss{22}{3}{q}}{\gauss{6}{3}{q}}
	= \frac{(q^{22} - 1)(q^{21} - 1)(q^{20} - 1)}{(q^6 - 1)(q^5 - 1)(q^4 - 1)} \\
	= \frac{\Phi_{22}(q)\Phi_{21}(q)\Phi_{20}(q)\Phi_{11}(q)\Phi_{10}(q)\Phi_{7}(q)}{\Phi_6(q)}
\end{multline*}
must be integral.
Using the fact that for any $x\in\mathbb{Z}$, $\gcd(\Phi_a(x),\Phi_b(x)) > 1$ can only happen if $a/b$ is the integral power of a prime, we see that $\lambda_0 \in\mathbb{Z}$ implies $q = 1$.
So in contrast to the set-theoretic case $q = 1$, for $q \geq 2$ the parameters $3$-$(22,6,1)_q$ are not admissible and there is no $3$-$(22,6,1)_q$ subspace design.
\end{ex}

Two subspace designs $(V,\mathcal{B})$ and $(V',\mathcal{B}')$ are called \emph{isomorphic} it there is a lattice isomorphism $\mathcal{L}(V) \to \mathcal{L}(V')$ mapping $\mathcal{B}$ to $\mathcal{B}'$.
By the fundamental theorem of projective geometry, the set of lattice isomorphisms $V \to V'$ is given by the set of bijective semilinear mappings $V \to V'$.
Furthermore, the group of lattice automorphisms of $\mathcal{L}(V)$ is isomorphic to the projective semilinear group $\PGammaL(V)$.

\subsection{Duality}
For an ordinary set-theoretic design $(V,\mathcal{B})$, the \emph{supplementary} design is defined as
\[
    (V,\{V\setminus B : B\in\mathcal{B}\})\text{.}
\]
It has the parameters
\[
    t\text{-}\left(v,v-k,\lambda\cdot \frac{\binom{v-k}{t}}{\binom{k}{t}}\right)\text{.}
\]

To get a $q$-analog of this construction, fix some non-singular bilinear form $\beta$ on the vector space $V$ over $\GF(q)$.
For $U\in\mathcal{L}(V)$ we denote the \emph{dual subspace} by
\[
    U^\perp = \{x\in V : \beta(x,y) = 0 \text{ for all }y\in U\}\text{.}
\]
For a $t$-$(v,k,\lambda)_q$ design $D = (V,\mathcal{B})$, the \emph{dual design} is defined as
\[
    D^\perp = (V,\{B^\perp : B\in \mathcal{B}\})\text{.}
\]
In~\cite[Lemma~4.2]{Suzuki-1990} it was shown that $D^\perp$ is a design with the parameters
\[
	t\text{-}\left(v,v-k,\lambda\cdot\frac{\gauss{v-k}{t}{q}}{\gauss{k}{t}{q}}\right)_q\text{.}
\]

In fact, the only required property is that $U \mapsto U^\perp$ is an antiautomorphism of the subspace lattice $\mathcal{L}(V)$ of $V$.
For two such antiautomorphisms $\phi$ and $\phi'$ the mapping $\phi^{-1}\circ\phi'$ is an automorphism of $\mathcal{L}(V)$.
So up to isomorphism, the dual design $D^\perp$ does not depend on the choice of the antiautomorphism.

\section{Derived and residual designs}
\begin{defn}
	\label{def:der_res}
	Let $D = (V,\mathcal{B})$ be a $t$-$(v,k,\lambda)_q$ design.
	For $U\in\gauss{V}{1}{q}$, the \emph{derived} design of $D$ in $U$ is defined as
	\[
	    \Der_U(D) = (V/U, \{B / U : B\in\mathcal{B}, U\leq B\})\text{.}
	\]
	For $H\in\gauss{V}{v-1}{q}$, the \emph{residual} design of $D$ in $H$ is defined as
	\[
	    \Res_H(D) = (H, \{B : B\in\mathcal{B}, B\leq H\})\text{.}
	\]
\end{defn}

For the special case of Steiner systems, the derived design was used in \cite[Th.~2]{Schwartz-Etzion-2002}.
To the authors' knowledge, the above notion of the residual design is new.

We point out that the derived subspace design is a factor design while the residual design is a subdesign of $D$.
Of course, there are many choices for $U$ and $H$, which may lead to non-isomorphic derived and residual designs.
However, the design parameters are the same for all derived and all residual parameters, respectively:

\begin{lem}
	\label{lem:der_res}
	With the notation as in Definition~\ref{def:der_res}, $\Der_U(D)$ is a $(t-1)$-$(v-1,k-1,\lambda)_q$ design and $\Res_H(D)$ is a $(t-1)$-$(v-1,k,\mu)_q$ design where
	\[
		\mu
		= \lambda\cdot\frac{\gauss{v-k}{1}{q}}{\gauss{k-t+1}{1}{q}}
		= \lambda\cdot \frac{q^{v-k} - 1}{q^{k-t+1} - 1}
		= \frac{\lambda_{t-1} - \lambda}{q^{k-t+1}}\text{.}
	\]
\end{lem}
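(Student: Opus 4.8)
The plan is to treat the two statements separately, since the derived design is handled by the lattice correspondence theorem while the residual design requires a genuine double count.

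For $\Der_U(D)$, I would first record the dimensions: $V/U$ has dimension $v-1$, and for a block $B$ with $U\leq B$ the image $B/U$ is a $(k-1)$-subspace of $V/U$. The key tool is the order-preserving bijection between subspaces of $V/U$ and subspaces of $V$ containing $U$, under which $(t-1)$-subspaces $\bar T$ of $V/U$ correspond to $t$-subspaces $T$ of $V$ with $U\leq T$, and $\bar T\leq B/U$ holds if and only if $T\leq B$. Since $U\leq T$ forces $U\leq B$ automatically, the blocks $B/U$ of $\Der_U(D)$ containing a fixed $\bar T$ are exactly the images of the blocks $B\in\mathcal{B}$ containing $T$. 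As $D$ is a $t$-design there are exactly $\lambda$ of these, independently of $\bar T$, so $\Der_U(D)$ is a $(t-1)$-$(v-1,k-1,\lambda)_q$ design.

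For $\Res_H(D)$ the ambient space $H$ has dimension $v-1$ and the blocks are $k$-subspaces, so only the value of $\mu$ is at issue. I would fix a $(t-1)$-subspace $T'\leq H$ and count the blocks $B\in\mathcal{B}$ with $T'\leq B\leq H$ by counting their complement inside the set of all blocks through $T'$. Recall from Lemma~\ref{lem:lambda_s} that $T'$ lies in exactly $\lambda_{t-1}$ blocks of $D$, and that, $H$ being a hyperplane, a block $B\geq T'$ either satisfies $B\leq H$ or meets $H$ in the $(k-1)$-subspace $B\cap H\geq T'$. I would therefore double count the pairs $(B,T)$ with $B\in\mathcal{B}$, $\dim T=t$, $T'\leq T\leq B$ and $T\not\leq H$. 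Fixing $T$ first, a $t$-subspace through $T'$ corresponds to a $1$-subspace of $V/T'$, and the condition $T\not\leq H$ excludes the $1$-subspaces lying in $H/T'$, leaving $\gauss{v-t+1}{1}{q}-\gauss{v-t}{1}{q}=q^{v-t}$ choices, each carrying $\lambda$ blocks; fixing instead a block $B\geq T'$ with $B\not\leq H$, the same reasoning inside $B$ (with $B\cap H$ playing the role of $H$) gives $\gauss{k-t+1}{1}{q}-\gauss{k-t}{1}{q}=q^{k-t}$ choices of $T$.

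Equating the two counts shows that the number of blocks $B\geq T'$ with $B\not\leq H$ equals $\lambda q^{v-t}/q^{k-t}=\lambda q^{v-k}$, whence $\mu=\lambda_{t-1}-\lambda q^{v-k}$, independent of $T'$; this already proves that $\Res_H(D)$ is a $(t-1)$-design. Substituting the value of $\lambda_{t-1}$ from Lemma~\ref{lem:lambda_s} and simplifying then yields the two closed forms for $\mu$ in the statement. The only real obstacle is the bookkeeping in the double count, namely checking that every block through $T'$ not contained in $H$ really meets $H$ in a hyperplane of $B$ containing $T'$, so that the inner count is again of the shape $\gauss{k-t+1}{1}{q}-\gauss{k-t}{1}{q}$; everything else is routine $q$-arithmetic. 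As an alternative, I note that one could instead deduce the residual statement from the derived one by duality, observing that $B\mapsto B^\perp$ identifies the blocks of $\Res_H(D)$ with the blocks of $\Der_{H^\perp}(D^\perp)$ through the $1$-subspace $H^\perp$.
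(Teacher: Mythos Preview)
Your argument for $\Der_U(D)$ via the lattice correspondence is exactly what the paper does, just spelled out in more detail.

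For $\Res_H(D)$ your approach differs genuinely from the paper's. The paper simply invokes an external result (Suzuki's formula for the numbers $\mu_i^j$, here with $i=t-1$, $j=1$) to read off the number of blocks $B$ with $T'\leq B\leq H$ directly as $\lambda\cdot\gauss{v-t}{k-t+1}{q}\big/\gauss{v-t}{k-t}{q}$. You instead give a self-contained double count of pairs $(B,T)$ with $T'\leq T\leq B$ and $T\not\leq H$, obtaining $\mu=\lambda_{t-1}-\lambda q^{v-k}$ and then simplifying. Your computation is correct: the two side counts $q^{v-t}$ and $q^{k-t}$ are right, and the resulting expression does reduce to $\lambda\cdot(q^{v-k}-1)/(q^{k-t+1}-1)$. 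What your route buys is independence from the cited lemma (in effect you are reproving the special case $\mu_{t-1}^1$ of it), at the cost of a little more bookkeeping; the paper's route is shorter but imports the work. Your closing remark on duality is also apt and matches a later lemma in the paper linking $\Res_H(D)^\perp$ with $\Der_{H^\perp}(D^\perp)$.
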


\begin{proof}
	A $t$-subspace $T$ that contains $U$ lies in exactly $\lambda$
	blocks $B \in {\mathcal B}$ that also contain $U$. Factoring out $U$ yields the
	blocks of the claimed derived design on $V/U$.

	For $\Res_H(D)$, we have to count the blocks $B\in\mathcal{B}$ with $T\leq B\leq H$ for any $(t-1)$-subspace $T$ of $H$.
	According to \cite[Lemma~4.2]{suzuki-fivedays}, this number is $\lambda\cdot\left(\gauss{v-t}{k-t+1}{q}/\gauss{v-t}{k-t}{q}\right)$, which evaluates to the expressions given for $\mu$.
\end{proof}

\begin{rem}
\begin{enumerate}[(a)]
\item For $q = 1$, we get back the parameters of the derived and the residual design in the set-theoretic case.
\item In the above proof, the numbers $\mu_i^j$ of \cite[Lemma~4.2]{suzuki-fivedays} were used in the special case $i = t-1$, $j = 1$.
More general that lemma says that for nonnegative integers $i$ and $j$ with $i + j \leq t$ and fixed subspaces $I\in\gauss{V}{i}{q}$, $J\in\gauss{V}{j}{q}$, the number
\[
    \mu_i^j = \#\{B\in\mathcal{B} \mid I \leq B \leq J\}
\]
does not depend on the choice of $I$ and $J$
With the notion of reduced and residual designs, we can give the following alternative characterization:
The number $\mu_i^j$ is the parameter $\lambda$ of any design which arises from a $t$-$(v,k,\lambda)_q$ design by taking $j$ times the residual design and $t-i-j$ times the reduced design, no matter in which order the reducing and residual steps are performed and which subspaces $H$ are chosen for the residual steps.%
\footnote{The $i$-fold reduced and $j$-fold residual is a $(t-i-j)$-$(v-j, k, \mu_{t-i-j}^j)_q$ design.}
\end{enumerate}
\end{rem}

\begin{ex}
\label{ex:witt}
The famous large Witt design with parameters $5$-$(24,8,1)$ does not have a $q$-analog for any prime power $q\geq 2$.
Otherwise, taking the derived design twice would give a $3$-$(22,6,1)_q$ design in contradiction to Example~\ref{ex:3-22_6_1}.%
\footnote{Similar to Example~\ref{ex:3-22_6_1}, it can also be shown that the parameters $5$-$(24,8,1)_q$ are not admissible for any prime power $q\geq 2$.}
\end{ex}

\begin{defn}
	Let $t$-$(v,k,\lambda)_q$ be a (not necessarily admissible) parameter set.
	We define its
	\begin{enumerate}[(a)]
		\item \emph{reduced parameter set} $(t-1)$-$(v,k,\lambda_{t-1})$,
		\item \emph{derived parameter set} $(t-1)$-$(v-1,k-1,\lambda)$,
		\item \emph{residual parameter set} $(t-1)$-$(v-1,k,\lambda\cdot \frac{q^{v-k}-1}{q^{k-t+1}-1})$.
	\end{enumerate}
\end{defn}

\begin{lem}
\label{lem:lambda_delta_rho}
Let $t$-$(v,k,\lambda)_q$ be a parameter set and $s\in\{0,\ldots,t-1\}$.
Denoting the $\lambda_s$-parameter (see Lemma~\ref{lem:lambda_s}) of the derived parameter set by $\delta_s$ and that of the residual parameter set by $\rho_s$, we have
\[
	\lambda_s
	= \delta_s + q^{k-s} \rho_s 
	= q^{v-k} \delta_s + \rho_s\text{.}
\]
\end{lem}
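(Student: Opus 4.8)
The plan is to reduce both asserted equalities to the two $q$-Pascal triangle identities by writing $\lambda_s$, $\delta_s$ and $\rho_s$ out explicitly via Lemma~\ref{lem:lambda_s}. Applying the second formula of that lemma to the original parameter set gives $\lambda_s = \lambda\cdot\gauss{v-s}{k-s}{q}/\gauss{v-t}{k-t}{q}$. Applying the same formula to the derived parameter set $(t-1)$-$(v-1,k-1,\lambda)$ — so that the roles of $t,v,k$ are played by $t-1,v-1,k-1$ while $s$ is left unchanged — yields $\delta_s = \lambda\cdot\gauss{v-1-s}{k-1-s}{q}/\gauss{v-t}{k-t}{q}$, whose denominator already agrees with that of $\lambda_s$.

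The first genuine step is to bring $\rho_s$ into the same shape. Applying Lemma~\ref{lem:lambda_s} to the residual parameter set $(t-1)$-$(v-1,k,\mu)$ with $\mu = \lambda(q^{v-k}-1)/(q^{k-t+1}-1)$ gives $\rho_s = \mu\cdot\gauss{v-1-s}{k-s}{q}/\gauss{v-t}{k-t+1}{q}$. I would then observe, directly from the product formula defining the Gaussian binomial coefficients, that $\gauss{v-t}{k-t+1}{q} = \gauss{v-t}{k-t}{q}\cdot(q^{v-k}-1)/(q^{k-t+1}-1)$. Substituting this into the denominator cancels exactly the extra factor carried by $\mu$, leaving the clean expression $\rho_s = \lambda\cdot\gauss{v-1-s}{k-s}{q}/\gauss{v-t}{k-t}{q}$.

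At this point all three quantities share the common prefactor $\lambda/\gauss{v-t}{k-t}{q}$, so after cancelling it the two claimed identities become, respectively, $\gauss{v-s}{k-s}{q} = \gauss{v-1-s}{k-1-s}{q} + q^{k-s}\gauss{v-1-s}{k-s}{q}$ and $\gauss{v-s}{k-s}{q} = q^{v-k}\gauss{v-1-s}{k-1-s}{q} + \gauss{v-1-s}{k-s}{q}$. These are precisely the two $q$-Pascal triangle identities evaluated at upper index $v-s$ and lower index $k-s$, using that $(v-s)-(k-s) = v-k$; hence both follow at once.

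I do not expect a real obstacle: the argument is a direct computation, and the only places demanding care are the bookkeeping of the index shifts when Lemma~\ref{lem:lambda_s} is applied to the derived and residual parameter sets, and the cancellation in $\rho_s$, which hinges on recognising the factor $(q^{v-k}-1)/(q^{k-t+1}-1)$ as the ratio $\gauss{v-t}{k-t+1}{q}/\gauss{v-t}{k-t}{q}$. Everything is a rational identity in $q$, so no realizability of any of the three parameter sets is needed.
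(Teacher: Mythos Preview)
Your proof is correct and follows exactly the same approach as the paper: the paper simply asserts the three formulas $\lambda_s = \lambda\gauss{v-s}{k-s}{q}/\gauss{v-t}{k-t}{q}$, $\delta_s = \lambda\gauss{v-s-1}{k-s-1}{q}/\gauss{v-t}{k-t}{q}$ and $\rho_s = \lambda\gauss{v-s-1}{k-s}{q}/\gauss{v-t}{k-t}{q}$ as ``straightforward to check'' and then invokes the $q$-Pascal identities. You have merely made explicit the cancellation needed to put $\rho_s$ in that form.
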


\begin{proof}
It is straightforward to check that
\[
	\lambda_s = \lambda\cdot\frac{\gauss{v-s}{k-s}{q}}{\gauss{v-t}{k-t}{q}}\text{,}
	\qquad
	\delta_s = \lambda\cdot\frac{\gauss{v-s-1}{k-s-1}{q}}{\gauss{v-t}{k-t}{q}}
	\qquad\text{and}\qquad
	\rho_s = \lambda\cdot\frac{\gauss{v-s-1}{k-s}{q}}{\gauss{v-t}{k-t}{q}}\text{.}
\]
Now the claim follows from the $q$-Pascal triangle identities.
\end{proof}

\begin{lem}
	Let $t$-$(v,k,\lambda)_q$ be an admissible parameter set.
	Then its reduced, derived and residual parameter sets are admissible, too.
\end{lem}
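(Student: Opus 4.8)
The plan is to reduce the assertion, in each of the three cases, to the integrality of the $\lambda_s$-parameters of the parameter set in question, and then to express each of these in terms of the $\lambda_0,\dots,\lambda_t$ of the original parameter set, which are integral by the assumed admissibility. By definition, a parameter set of strength $t-1$ is admissible precisely when its numbers $\lambda_0,\dots,\lambda_{t-1}$ (computed via Lemma~\ref{lem:lambda_s}) are integers; for the derived and residual parameter sets these numbers are exactly the $\delta_0,\dots,\delta_{t-1}$ and $\rho_0,\dots,\rho_{t-1}$ introduced in Lemma~\ref{lem:lambda_delta_rho}.

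For the derived parameter set I would compare the explicit formula $\delta_s = \lambda\cdot\gauss{v-s-1}{k-s-1}{q}/\gauss{v-t}{k-t}{q}$ from the proof of Lemma~\ref{lem:lambda_delta_rho} with the formula for $\lambda_{s+1}$ coming from Lemma~\ref{lem:lambda_s}, observing that the two coincide: $\delta_s = \lambda_{s+1}$ for all $s\in\{0,\dots,t-1\}$. As $s$ ranges over $\{0,\dots,t-1\}$ the index $s+1$ ranges over $\{1,\dots,t\}$, so admissibility of the original parameter set immediately yields $\delta_s\in\mathbb{Z}$, that is, admissibility of the derived parameter set.

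For the residual parameter set I would invoke the second identity of Lemma~\ref{lem:lambda_delta_rho}, namely $\lambda_s = q^{v-k}\delta_s + \rho_s$, to write $\rho_s = \lambda_s - q^{v-k}\delta_s = \lambda_s - q^{v-k}\lambda_{s+1}$. This exhibits $\rho_s$ as an integer combination of $\lambda_s$ and $\lambda_{s+1}$, both integral for $s\in\{0,\dots,t-1\}$, so $\rho_s\in\mathbb{Z}$ and the residual parameter set is admissible. Finally, for the reduced parameter set $(t-1)$-$(v,k,\lambda_{t-1})$ one substitutes the formula for $\lambda_{t-1}$ into the $\lambda_s$-formula of Lemma~\ref{lem:lambda_s} applied to it, which collapses to the original $\lambda_s$ for each $s\in\{0,\dots,t-1\}$; these are integral by hypothesis.

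I do not expect a genuine obstacle here: once the defining formulas are written down, the whole argument reduces to the single observation $\delta_s = \lambda_{s+1}$ together with the elimination identity of Lemma~\ref{lem:lambda_delta_rho}. The only point requiring a small verification is the bookkeeping of index ranges, namely that lowering the strength to $t-1$ demands integrality only up to $s=t-1$, so that exactly the available values $\lambda_0,\dots,\lambda_t$ suffice.
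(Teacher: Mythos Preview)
Your argument is correct and follows exactly the paper's route: identify $\delta_s=\lambda_{s+1}$ to handle the derived case, then use $\rho_s=\lambda_s-q^{v-k}\delta_s$ from Lemma~\ref{lem:lambda_delta_rho} for the residual case, with the reduced case being immediate. The only difference is that you spell out the verification for the reduced parameter set where the paper simply says ``it is clear''.
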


\begin{proof}
	It is clear that the reduced parameter set is admissible.
	With the notation as in Lemma~\ref{lem:lambda_delta_rho}, $\delta_s = \lambda_{s+1}$ is an integer for all $s\in\{0,\ldots,t-1\}$, so the derived parameter set is admissible.
	Now by $\rho_s = \lambda_s - q^{v-k} \delta_s$, also the residual parameter set is admissible.
\end{proof}

With respect to this notion of duality, the derived and the residual design are dual concepts:
\begin{lem}
Let $D$ be a design on $V$, $U\in\gauss{V}{1}{q}$ and $H\in\gauss{V}{v-1}{q}$.
Then
\[
    \Der_U(D)^\perp \cong \Res_{U^\perp}(D^\perp)\qquad\text{and}\qquad \Res_H(D)^\perp \cong \Der_{H^\perp}(D^\perp)\text{.}
\]
\end{lem}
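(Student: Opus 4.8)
The plan is to prove the first isomorphism $\Der_U(D)^\perp \cong \Res_{U^\perp}(D^\perp)$ directly, and then to deduce the second one from it by passing to the dual. Throughout I write $\perp$ for the fixed antiautomorphism $W \mapsto W^\perp$ of $\mathcal{L}(V)$, and I use that it is inclusion-reversing and involutive, i.e.\ $W^{\perp\perp} = W$.

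First I would record the two order-theoretic facts that drive everything. On the one hand, the correspondence theorem gives a lattice isomorphism between the interval $\{W \in \mathcal{L}(V) : U \leq W\}$ and $\mathcal{L}(V/U)$, sending $W \mapsto W/U$. On the other hand, since $\perp$ reverses inclusions and satisfies $U^{\perp\perp} = U$, for every block $B \in \mathcal{B}$ we have $B^\perp \leq U^\perp \iff U \leq B$; moreover $\perp$ restricts to an anti-isomorphism from $\{W : U \leq W\}$ onto $\mathcal{L}(U^\perp)$ (the subspaces of $V$ contained in $U^\perp$). Composing the two, I obtain a lattice anti-isomorphism
\[
\phi : \mathcal{L}(V/U) \longrightarrow \mathcal{L}(U^\perp), \qquad \phi(W/U) = W^\perp \quad (U \leq W)\text{.}
\]

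Next I would identify the block sets. By the equivalence $B^\perp \leq U^\perp \iff U \leq B$, the blocks of $\Res_{U^\perp}(D^\perp)$ are exactly $\{B^\perp : B \in \mathcal{B},\ U \leq B\}$, while the blocks of $\Der_U(D)$ are $\{B/U : B \in \mathcal{B},\ U \leq B\}$, and $\phi$ carries the latter to the former. The one genuinely delicate point is that $\Der_U(D)^\perp$ is formed by an antiautomorphism of the single lattice $\mathcal{L}(V/U)$, not by the anti-isomorphism $\phi$ between two different spaces. To bridge this, I would choose any lattice isomorphism $\iota : \mathcal{L}(U^\perp) \to \mathcal{L}(V/U)$ (which exists because both spaces have dimension $v-1$ over $\GF(q)$) and set $\tau = \iota \circ \phi$, an antiautomorphism of $\mathcal{L}(V/U)$. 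By the observation in Section~2 that the dual design is independent of the chosen antiautomorphism up to isomorphism, I may compute $\Der_U(D)^\perp$ using $\tau$; its blocks are then $\tau(B/U) = \iota(B^\perp)$ for $U \leq B \in \mathcal{B}$. Consequently $\iota^{-1} : \mathcal{L}(V/U) \to \mathcal{L}(U^\perp)$ is a lattice isomorphism carrying the block set of $\Der_U(D)^\perp$ onto $\{B^\perp : U \leq B\}$, which is precisely the block set of $\Res_{U^\perp}(D^\perp)$. This establishes the first isomorphism.

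Finally, for $\Res_H(D)^\perp \cong \Der_{H^\perp}(D^\perp)$ I would simply invoke the first isomorphism. Since $H$ is a hyperplane, $H^\perp$ is a $1$-subspace, so substituting $D^\perp$ for $D$ and $H^\perp$ for $U$ gives $\Der_{H^\perp}(D^\perp)^\perp \cong \Res_{(H^\perp)^\perp}(D^{\perp\perp})$. Using $(H^\perp)^\perp = H$ and $D^{\perp\perp} = D$, the right-hand side is $\Res_H(D)$; dualizing once more and invoking $D^{\perp\perp} \cong D$ then yields the claim. I expect the only real obstacle to be the careful bookkeeping in the previous paragraph, namely reconciling the ``anti-isomorphism between two spaces'' picture with the ``antiautomorphism of one space'' definition of the dual design, whereas the second isomorphism is then essentially formal.
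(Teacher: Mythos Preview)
The paper states this lemma without proof, so there is nothing to compare against; your argument is the natural one and it is correct. The key observation---that $\perp$ restricts to an anti-isomorphism from the interval above $U$ onto $\mathcal{L}(U^\perp)$, and hence (via the correspondence theorem) gives an anti-isomorphism $\mathcal{L}(V/U)\to\mathcal{L}(U^\perp)$ carrying the block set of $\Der_U(D)$ bijectively onto that of $\Res_{U^\perp}(D^\perp)$---is exactly what is needed, and your handling of the bookkeeping (reconciling an anti-isomorphism between two lattices with the definition of the dual via an antiautomorphism of a single lattice) is careful and correct, invoking precisely the remark in Section~2.3 that the dual is well-defined up to isomorphism.

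Two minor comments. First, the detour through the auxiliary isomorphism $\iota$ can be shortened: once you have the anti-isomorphism $\phi$ carrying blocks to blocks, pick \emph{any} antiautomorphism $\sigma$ of $\mathcal{L}(V/U)$ to form $\Der_U(D)^\perp$; then $\phi\circ\sigma^{-1}$ is an isomorphism $\mathcal{L}(V/U)\to\mathcal{L}(U^\perp)$ taking the blocks $\sigma(B/U)$ of $\Der_U(D)^\perp$ to $B^\perp$, and you are done. Second, in the derivation of the second isomorphism you use both $D^{\perp\perp}=D$ and $D^{\perp\perp}\cong D$; the equality holds for the bilinear-form $\perp$ the paper fixes, but since the paper also allows an arbitrary antiautomorphism it is safest to work with $\cong$ throughout (which your argument in fact only needs).
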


\begin{ex}
\label{ex:Fano_dual}
The dual of a $q$-analog of the Fano plane (a $2$-$(7,3,1)_q$ design) would be a $2$-$(7,4,q^2 + 1)_q$ design.
These are the derived and the residual parameter sets of the parameter set $3$-$(8,4,1)_q$.
The same is true in the set-theoretic case $q = 1$, where we know that all the designs actually exist.
\end{ex}

\section{A $q$-analog of a theorem by Van~Trung, van~Leijenhorst and Driessen}
\label{sect:tld}

By the discussion in the previous section, the admissibility of a parameter set implies the admissibility of its derived, residual and reduced parameter sets.
Realizability is propagated in the same way.
In this section, we study the consequences of the derived and the residual design parameters both being admissible (resp. realizable), without requiring the original parameter set to be admissible (resp. realizable).

\begin{lem}
	\label{tvtq_admissibility}
	Let $t$-$(v,k,\lambda)_q$ be a parameter set whose derived and residual parameter sets are admissible.
	Then $t$-$(v,k,\lambda)_q$ is admissible, too.
\end{lem}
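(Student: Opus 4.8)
The plan is to unwind the definition of admissibility and then exploit the additive relation between $\lambda_s$, $\delta_s$ and $\rho_s$ recorded in Lemma~\ref{lem:lambda_delta_rho}. Recall that the parameter set $t$-$(v,k,\lambda)_q$ is admissible precisely when $\lambda_s\in\mathbb{Z}$ for every $s\in\{0,\ldots,t\}$, where by convention $\lambda_t=\lambda$. By hypothesis the derived and the residual parameter sets are admissible, and this means exactly that $\delta_s\in\mathbb{Z}$ and $\rho_s\in\mathbb{Z}$ for all $s\in\{0,\ldots,t-1\}$, since $\delta_s$ and $\rho_s$ are by definition the $\lambda_s$-parameters of those two $(t-1)$-parameter sets.

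For the range $s\in\{0,\ldots,t-1\}$ I would simply invoke the first equality of Lemma~\ref{lem:lambda_delta_rho}, namely $\lambda_s=\delta_s+q^{k-s}\rho_s$. As $q$ is a prime power and $k-s\geq k-t+1\geq 1$, the factor $q^{k-s}$ is a positive integer; combined with $\delta_s,\rho_s\in\mathbb{Z}$ this gives $\lambda_s\in\mathbb{Z}$ at once. (Alternatively, the second equality $\lambda_s=q^{v-k}\delta_s+\rho_s$ works just as well.)

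The only index not covered by this argument is $s=t$, i.e.\ the integrality of $\lambda$ itself, which lies outside the range of the identity. Here the key observation is that $\lambda$ coincides with $\delta_{t-1}$: substituting $s=t-1$ into the formula $\delta_s=\lambda\cdot\gauss{v-s-1}{k-s-1}{q}/\gauss{v-t}{k-t}{q}$ from the proof of Lemma~\ref{lem:lambda_delta_rho} yields $\delta_{t-1}=\lambda\cdot\gauss{v-t}{k-t}{q}/\gauss{v-t}{k-t}{q}=\lambda$. Hence $\lambda=\delta_{t-1}\in\mathbb{Z}$ by admissibility of the derived parameter set, which closes the remaining case and establishes $\lambda_s\in\mathbb{Z}$ for all $s\in\{0,\ldots,t\}$.

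I do not expect a genuine obstacle: once the identity of Lemma~\ref{lem:lambda_delta_rho} is in hand, the argument is a short piece of bookkeeping. The only point that requires a moment's care is precisely the top index $s=t$, which is not an instance of that identity and must be handled separately through the coincidence $\lambda=\delta_{t-1}$.
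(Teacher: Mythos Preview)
Your proof is correct and follows essentially the same approach as the paper: use the identity $\lambda_s=\delta_s+q^{k-s}\rho_s$ from Lemma~\ref{lem:lambda_delta_rho} for $s\in\{0,\ldots,t-1\}$, and handle the remaining case $s=t$ via the observation $\lambda_t=\lambda=\delta_{t-1}$. Your write-up is in fact slightly more careful than the paper's, which (presumably by a typo) states the range as $s\in\{1,\ldots,t-1\}$ and omits $s=0$.
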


\begin{proof}
	We use the notation as in the proof of Lemma~\ref{lem:lambda_delta_rho}.
	According to that lemma, the values $\lambda_s$ are integers for all $s\in\{1,\ldots,t-1\}$.
	Furthermore, $\lambda_t = \delta_{t-1}$ is an integer.
\end{proof}

\begin{thm}
\label{tvtq}
Let $t$-$(v,k,\lambda)_q$ be a parameter set whose derived and residual parameter sets are realizable.
Then its reduced parameter set is realizable, too.
\end{thm}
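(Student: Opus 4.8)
The plan is to build the reduced design explicitly on $V$ by fixing a point $U\in\gauss{V}{1}{q}$ and splitting the $k$-subspaces of $V$ according to whether they contain $U$, realizing the blocks through $U$ from the given derived design and the blocks avoiding $U$ from the given residual design. Since both given designs live on some $(v-1)$-dimensional space, I would first transport them onto $V/U$ by arbitrary isomorphisms; call them $D_\delta$ (a $(t-1)$-$(v-1,k-1,\lambda)_q$ design on $V/U$) and $D_\rho$ (a $(t-1)$-$(v-1,k,\mu)_q$ design on $V/U$), with $\mu$ as in Lemma~\ref{lem:der_res}.

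For the blocks through $U$, each $\bar B\in D_\delta$ is a $(k-1)$-subspace of $V/U$ whose preimage under $V\to V/U$ is the unique $k$-subspace $B\geq U$ with $B/U=\bar B$; let $\mathcal{B}_1$ be the set of these preimages. A $(t-1)$-subspace $T\leq V$ lies in such a $B$ iff $(T+U)/U\leq\bar B$, so I would read off the coverage directly: if $U\leq T$ then $T/U$ is a $(t-2)$-subspace of $V/U$ and $T$ lies in $\delta_{t-2}$ blocks of $\mathcal{B}_1$, whereas if $U\not\leq T$ then $(T+U)/U$ is a $(t-1)$-subspace of $V/U$ and $T$ lies in $\delta_{t-1}=\lambda$ blocks. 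A direct evaluation with Lemma~\ref{lem:lambda_s} gives $\delta_{t-2}=\lambda\cdot\frac{q^{v-t+1}-1}{q^{k-t+1}-1}=\lambda_{t-1}$, so the $(t-1)$-subspaces through $U$ already attain their target multiplicity $\lambda_{t-1}$, while those avoiding $U$ attain only $\lambda$ so far.

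For the blocks avoiding $U$ the genuinely $q$-specific idea enters: each $\bar B\in D_\rho$ is a $k$-subspace of $V/U$ whose preimage $P$ is a $(k+1)$-subspace containing $U$, and I would adjoin as blocks all $q^k$ complements of $U$ in $P$ (the $k$-subspaces of $P$ meeting $U$ trivially). Let $\mathcal{B}_2$ be the resulting set; every member avoids $U$, so no $(t-1)$-subspace through $U$ meets $\mathcal{B}_2$. The hard part is the fiber count: for an off-$U$ subspace $T$ with $(T+U)/U\leq\bar B$, I claim that exactly $q^{k-t+1}$ of the $q^k$ lifts of $\bar B$ contain $T$. I would prove this by passing to $P/T$, in which $U$ maps to a line and the lifts of $\bar B$ containing $T$ correspond bijectively to the complements of that line in the $(k-t+2)$-dimensional space $P/T$, of which there are exactly $q^{k-t+1}$. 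Summing over the $\mu$ blocks $\bar B\in D_\rho$ with $(T+U)/U\leq\bar B$ then shows that each off-$U$ $(t-1)$-subspace lies in exactly $q^{k-t+1}\mu$ blocks of $\mathcal{B}_2$.

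Finally I would combine the two families. They are disjoint (by membership of $U$) and each is repetition-free, so $\mathcal{B}=\mathcal{B}_1\cup\mathcal{B}_2$ is a set of $k$-subspaces of $V$. A $(t-1)$-subspace through $U$ lies in $\lambda_{t-1}+0$ blocks, and one avoiding $U$ lies in $\lambda+q^{k-t+1}\mu$ blocks; by Lemma~\ref{lem:lambda_delta_rho} at $s=t-1$ (where $\delta_{t-1}=\lambda$ and $\rho_{t-1}=\mu$) the latter equals $\lambda_{t-1}$ as well. Hence $(V,\mathcal{B})$ is a $(t-1)$-$(v,k,\lambda_{t-1})_q$ design, so the reduced parameter set is realizable. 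I expect the only real obstacle to be the $q^{k-t+1}$ fiber count for the multi-lift of the residual blocks; everything else parallels the set-theoretic argument and degenerates to it at $q=1$, where the $q^k$ lifts collapse to the single set-theoretic extension by the adjoined point.
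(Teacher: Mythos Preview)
Your argument is correct and essentially identical to the paper's: you fix a $1$-subspace $U$, lift the derived blocks to the $k$-subspaces through $U$ and lift each residual block to all $q^k$ complements of $U$ in its preimage, then verify the covering count by splitting on whether $T$ contains $U$ and computing the $q^{k-t+1}$ fiber count modulo $T$. The only cosmetic differences are that the paper phrases the quotient via an abstract surjection $\varphi\colon V\to\bar V$ and computes $\lambda+q^{k-t+1}\mu=\lambda_{t-1}$ directly rather than citing Lemma~\ref{lem:lambda_delta_rho}.
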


\begin{proof}
Let $V$ be a $v$-dimensional vector space over $\GF(q)$, $\bar{V}$ a $(v-1)$-dimensional vector space over $\GF(q)$ and $\varphi : V \to \bar{V}$ a surjective $\GF(q)$-linear map.
Then $U = \ker(\varphi)$ is a $1$-subspace of $V$.
By the preconditions, there exists a $(t-1)$-$(v-1,k-1,\lambda)_q$ design $D_{\Der} = (\bar{V},\mathcal{B}_{\Der})$ and a $(t-1)$-$(v-1,k,\mu)_q$ design $D_{\Res} = (\bar{V},\mathcal{B}_{\Der})$ on $\bar{V}$, where $\mu = \lambda\cdot\frac{q^{v - k} - 1}{q^{k - t + 1} - 1}$.
Let
\[
    \mathcal{B}_1 = \{\varphi^{-1}(\bar{B}) : \bar{B} \in \mathcal{B}_{\Der}\}
\]
and
\[
    \mathcal{B}_2 = \{K : K\text{ is complement of }U\text{ in }\varphi^{-1}(\bar{B}), \bar{B}\in \mathcal{B}_{\Res}\}\text{.}
\]
Both sets $\mathcal{B}_1$ and $\mathcal{B}_2$ consist of $k$-subspaces of $V$.
We remark that each block $B = \varphi^{-1}(\bar{B})\in \mathcal{B}_1$ is uniquely determined by $\bar{B}\in \mathcal{B}_{\Der}$, and each block $B\in \mathcal{B}_2$ uniquely determines its $\bar{B}\in \mathcal{B}_{\Res}$ as $\bar{B} = \varphi(B)$.
Furthermore, the sets $\mathcal{B}_1$ and $\mathcal{B}_2$ are clearly disjoint, since the elements of $\mathcal{B}_1$ contain $U$, while the elements of $\mathcal{B}_2$ do not.

Now we claim that $(V,\mathcal{B}_{\Red})$ with $\mathcal{B}_{\Red} = \mathcal{B}_1 \cup \mathcal{B}_2$ is a design with the reduced parameter set $(t-1)$-$(v,k,\lambda_{\Red})_q$ where by Lemma~\ref{lem:lambda_s}
\[
    \lambda_{\Red} = \lambda\cdot\frac{\gauss{v - (t-1)}{t - (t-1)}{q}}{\gauss{k - (t-1)}{t - (t-1)}{q}} = \lambda\cdot\frac{\gauss{v - t + 1}{1}{q}}{\gauss{k-t+1}{1}{q}} = \frac{q^{v-t+1}-1}{q^{k-t+1}-1}\text{.}
\]
For the verification, consider a $(t-1)$-subspace $T$ of $V$.
We count the blocks in $\mathcal{B}_{\Red}$ containing $T$.

If $U \leq T$, then all such blocks come from $\mathcal{B}_1$, and $T \leq B$ is equivalent to $\varphi(T) \leq \varphi(B)$.
Since the dimension of $\varphi(T)$ in $\bar{V}$ is $t-2$, the design $\mathcal{B}_{\Der}$ contains
\[
    \lambda\cdot\frac{\gauss{(v-1) - (t-2)}{(t-1) - (t-2)}{q}}{\gauss{(k-1)-(t-2)}{(t-1)-(t-2)}{q}}
    = \lambda_{\Red}
\]
blocks $\varphi(B) \geq \varphi(T)$, and each such block uniquely determines the preimage $B$.

Now assume $U \nleq T$.
For $B\in \mathcal{B}_1$, $T \leq B$ if and only if $\varphi(T) \leq \varphi(B)$ and $\varphi(B)$ is a block of $\mathcal{B}_{\Der}$.
Since $\varphi(T)$ has dimension $t-1$ in $\bar{V}$, there are $\lambda$ such blocks.
Furthermore, $B\in \mathcal{B}_2$ passes through $T$ if and only if $\bar{B} = \varphi(B)$ is a block of $\mathcal{B}_{\Res}$ containing the $(t-1)$-dimensional subspace $\varphi(T)$ and $B$ is a complement of $U$ in the $(k+1)$-dimensional space $\varphi^{-1}(\bar{B})$ such that $T\leq B$.
There are $\mu$ such blocks $\bar{B}$, and considering the situation modulo $T$, we see that for each $\varphi^{-1}(\bar{B})$ there are exactly $q^{1\cdot((k+1)-(t-1) - 1)} = q^{k-t+1}$ suitable complements $B$.%
\footnote{In general, a $u$-subspace $U$ of a $v$-dimensional vector space $V$ over $\GF(q)$ has exactly $q^{u(v-u)}$ complements in $V$.}

So in total, there are
\[
\lambda + q^{k-t+1}\mu
= \lambda\cdot\left(1 + \frac{q^{v-t+1}-q^{k-t+1}}{q^{k-t+1} - 1}\right)
= \lambda\cdot \frac{q^{v-t+1} - 1}{q^{k-t+1} - 1}
= \lambda_{\Red}
\]
blocks of $\mathcal{B}_{\Red}$ passing through $T$.
\end{proof}

\begin{ex}
By Theorem~\ref{tvtq} and Example~\ref{ex:Fano_dual}, the existence of a $q$-analog of the Fano plane (a $2$-$(7,3,1)_q$ design) would imply the existence of a design with the reduced parameters of $3$-$(8,4,1)_q$, which is a $2$-$(8,4,q^4 + q^2 + 1)_q$ design.
\end{ex}

\begin{rem}
\begin{enumerate}[(a)]
\item
If in Theorem~\ref{tvtq} the parameter set $t$-$(v,k,\lambda)_q$ is realizable as a design $\mathcal{D}$, the application of the construction to a derived and a residual design of $\mathcal{D}$ won't necessarily reproduce $\mathcal{D}$.

A counterexample is given by any $2$-$(13,3,1)_2$ Steiner system $\mathcal{D}$ which exists by \cite{BEOVW13}.
We assume that $\mathcal{D}$ arises as $\mathcal{B}_1 \cup \mathcal{B}_2$ by the above construction.
Take two distinct blocks $B_1, B_2\in\mathcal{B}_2$ with $\phi(B_1) = \phi(B_2) = \bar{B}$.
Then $B_1$ and $B_2$ are complements of $U$ in $\phi^{-1}(\bar{B})$.
So $\dim(B_1 + B_2) \leq \dim(\phi^{-1}(\bar{B})) = 4$ and therefore $\dim(B_1) \cap \dim(B_2) \geq 2$, which contradicts the Steiner system property of $\mathcal{D}$.
\item In the situation of Theorem~\ref{tvtq}, the parameter set $t$-$(v,k,\lambda)_q$ is admissible by Lemma~\ref{tvtq_admissibility}.
For ordinary block designs, it is known that the parameters $t$-$(v,k,\lambda)_q$ are not necessarily realizable.
It is natural to conjecture the same to be true in the $q$-analog case.
However, so far not a single admissible parameter set has been shown to be non-realizable.
\end{enumerate}
\end{rem}

\begin{cor}
\label{cor:new_designs}
The parameter sets
\[
    2\text{-}(8,4,\lambda)_2
    \qquad\text{with}\qquad
    \lambda\in\{63,84,147,168,189,252,273,294\}
\]
and
\[
    2\text{-}(10,4,\lambda)_2
    \qquad\text{with}\qquad
    \lambda\in\{1785, 1870, 3570, 3655, 5355\}\text{.}
\]
are realizable.
\end{cor}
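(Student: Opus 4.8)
The plan is to realise each listed parameter set as the \emph{reduced} parameter set of a suitable (and not necessarily realisable) $3$-design parameter set, and then to invoke Theorem~\ref{tvtq}. For the first family I would write $\lambda = 21\lambda'$ with $\lambda'\in\{3,4,7,8,9,12,13,14\}$ and observe, via Lemma~\ref{lem:lambda_s} (the case $s = t-1 = 2$, where $\gauss{6}{1}{2}/\gauss{2}{1}{2} = 63/3 = 21$), that $2$-$(8,4,21\lambda')_2$ is exactly the reduced parameter set of $3$-$(8,4,\lambda')_2$. Its derived parameter set is $2$-$(7,3,\lambda')_2$, and its residual parameter set has $\mu = \lambda'\cdot\frac{q^{v-k}-1}{q^{k-t+1}-1} = \lambda'\cdot\frac{2^4-1}{2^2-1} = 5\lambda'$, i.e.\ it is $2$-$(7,4,5\lambda')_2$. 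For the second family I would write $\lambda = 85\lambda'$ with $\lambda'\in\{21,22,42,43,63\}$, so that $2$-$(10,4,85\lambda')_2$ is the reduced parameter set of $3$-$(10,4,\lambda')_2$, whose derived parameter set is $2$-$(9,3,\lambda')_2$ and whose residual parameter set, with $\frac{2^6-1}{2^2-1} = 21$, is $2$-$(9,4,21\lambda')_2$. The point of this reduction is that the auxiliary $3$-designs need never be exhibited: Theorem~\ref{tvtq} only consumes the realisability of the derived and residual parameter sets.

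By Theorem~\ref{tvtq} it then suffices to supply designs with the derived and the residual parameter sets in each case. A welcome simplification occurs for $v = 8$: since $v - k = 3$, the dual of a $2$-$(7,3,\lambda')_2$ design is a $2$-$(7,4,\lambda'\cdot\gauss{4}{2}{2}/\gauss{3}{2}{2})_2 = 2$-$(7,4,5\lambda')_2$ design, which is precisely the required residual parameter set. Hence for the first family both the derived and the residual ingredient follow from a single input, a $2$-$(7,3,\lambda')_2$ design, by passing to the dual design. For $v = 10$ no such coincidence is available, since the dual of $2$-$(9,3,\cdot)_2$ has block dimension $6$ rather than $4$; there the two ingredient families $2$-$(9,3,\lambda')_2$ and $2$-$(9,4,21\lambda')_2$ must be provided independently.

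The decisive remaining step is to guarantee the existence of the ingredient designs: $2$-$(7,3,\lambda')_2$ for $\lambda'\in\{3,4,7,8,9,12,13,14\}$, together with $2$-$(9,3,\lambda')_2$ for $\lambda'\in\{21,22,42,43,63\}$ and $2$-$(9,4,\mu)_2$ for $\mu = 21\lambda'\in\{441,462,882,903,1323\}$. These are comparatively small parameter sets for which subspace designs have been obtained in the earlier literature (typically by the Kramer--Mesner method with prescribed automorphism groups), so I would cite the corresponding known constructions rather than rebuild them. I expect this existence bookkeeping to be the main obstacle, since it is exactly here that the nontrivial prior results are used; once the ingredients are fixed, the corollary follows by applying Theorem~\ref{tvtq} once per listed value of $\lambda$. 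As a final consistency check I would confirm that the reduced $\lambda$-values match the statement, namely $21\cdot\{3,4,7,8,9,12,13,14\} = \{63,84,147,168,189,252,273,294\}$ and $85\cdot\{21,22,42,43,63\} = \{1785,1870,3570,3655,5355\}$.
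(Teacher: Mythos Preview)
Your proposal is correct and follows exactly the paper's approach: each target is recognised as the reduced parameter set of $3$-$(8,4,\lambda')_2$ respectively $3$-$(10,4,\lambda')_2$, and Theorem~\ref{tvtq} is applied after citing known constructions for the derived and residual ingredients (the paper gives this in tabular form with references \cite{MBraun11,SBraun10}). Your duality shortcut for the $v=8$ case, obtaining $2$-$(7,4,5\lambda')_2$ from $2$-$(7,3,\lambda')_2$, is a pleasant extra economy that the paper does not make explicit.
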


\begin{proof}
Table~\ref{Param} lists in the first column certain admissible (but not known to be realizable) parameter sets $t$-$(v,k,\lambda)_q$ whose derived (column~2) and residual parameter set (column~3) are known to be realizable.
Now by Theorem~\ref{tvtq}, the reduced parameter set (column~4) is realizable.
\end{proof}

\begin{table}[ht]
\caption{Parameter sets of subspace designs}
\label{Param}
\centering$\begin{array}{|c|cc|cc|c|}
\hline
t\text{-}(v,k,\lambda)_q & \text{derived}      &   & \text{residual}     &    & \text{reduced}         \\ 
\hline
3\text{-}(8,4,3)_2 & 2\text{-}(7,3,3)_2  & \text{\cite{MBraun11}} & 2\text{-}(7,4,15)_2 &  \text{\cite{MBraun11}}   & 2\text{-}(8,4,63)_2    \\ 
3\text{-}(8,4,4)_2 & 2\text{-}(7,3,4)_2  & \text{\cite{MBraun11}} & 2\text{-}(7,4,20)_2 &  \text{\cite{MBraun11}}   & 2\text{-}(8,4,84)_2    \\ 
3\text{-}(8,4,7)_2 & 2\text{-}(7,3,7)_2  & \text{\cite{MBraun11}} & 2\text{-}(7,4,35)_2 &  \text{\cite{MBraun11}}   & 2\text{-}(8,4,147)_2   \\ 
3\text{-}(8,4,8)_2 & 2\text{-}(7,3,8)_2  & \text{\cite{MBraun11}} & 2\text{-}(7,4,40)_2 &  \text{\cite{MBraun11}}   & 2\text{-}(8,4,168)_2   \\ 
3\text{-}(8,4,9)_2 & 2\text{-}(7,3,9)_2  & \text{\cite{MBraun11}} & 2\text{-}(7,4,45)_2 &  \text{\cite{MBraun11}}   & 2\text{-}(8,4,189)_2   \\ 
3\text{-}(8,4,12)_2 & 2\text{-}(7,3,12)_2 & \text{\cite{MBraun11}} & 2\text{-}(7,4,60)_2 &  \text{\cite{MBraun11}}   & 2\text{-}(8,4,252)_2   \\ 
3\text{-}(8,4,13)_2 & 2\text{-}(7,3,13)_2 & \text{\cite{MBraun11}} & 2\text{-}(7,4,65)_2 &  \text{\cite{MBraun11}}   & 2\text{-}(8,4,273)_2   \\ 
3\text{-}(8,4,14)_2 & 2\text{-}(7,3,14)_2 & \text{\cite{MBraun11}} & 2\text{-}(7,4,70)_2 &  \text{\cite{MBraun11}}   & 2\text{-}(8,4,294)_2   \\ 
3\text{-}(10,4,21)_2 & 2\text{-}(9,3,21)_2 & \text{\cite{SBraun10}} & 2\text{-}(9,4,441)_2 &  \text{\cite{SBraun10}}  & 2\text{-}(10,4,1785)_2 \\ 
3\text{-}(10,4,22)_2 & 2\text{-}(9,3,22)_2 & \text{\cite{MBraun11}} & 2\text{-}(9,4,462)_2 &  \text{\cite{MBraun11}}  & 2\text{-}(10,4,1870)_2 \\ 
3\text{-}(10,4,42)_2 & 2\text{-}(9,3,42)_2 & \text{\cite{SBraun10}} & 2\text{-}(9,4,882)_2 &  \text{\cite{SBraun10}}  & 2\text{-}(10,4,3570)_2 \\ 
3\text{-}(10,4,43)_2 & 2\text{-}(9,3,43)_2 & \text{\cite{MBraun11}} & 2\text{-}(9,4,903)_2 &  \text{\cite{MBraun11}}  & 2\text{-}(10,4,3655)_2 \\ 
3\text{-}(10,4,63)_2 & 2\text{-}(9,3,63)_2 & \text{\cite{SBraun10}} & 2\text{-}(9,4,1323)_2 &  \text{\cite{SBraun10}} & 2\text{-}(10,4,5355)_2 \\
\hline
\end{array}$
\end{table}

To our knowledge, all the parameter sets in Corollary~\ref{cor:new_designs} were not known to be realizable before.

\section{Application to large sets}
\begin{defn}
A \emph{large set} $\LS_q[N](t,k,v)$ is partition of $\gauss{V}{k}{q}$ into $N$ subspace designs with the parameters $t$-$(v,k,\lambda)_q$.
More precisely, it is a collection
\[
\{(V,\mathcal{B}_i) : i\in\{1,\ldots,N\}\}
\]
of $t$-$(v,k,\lambda)_q$ designs such that $\{\mathcal{B}_i : i \in\{1,\ldots,N\}\}$ is a partition of $\gauss{V}{k}{q}$.
\end{defn}

Note that the parameter $\lambda$ does not appear in the parameter set $\LS_q[N](t,k,v)$ of a large set.
This is because under the definition of a large set, the parameter $\lambda = \gauss{v-t}{k-t}{q} / N$ is already determined by the other parameters.

By the properties of the dual design it is clear that the duals of the designs in a large set again form a large set.
So the existence of an $\LS_q[N](t,k,v)$ is equivalent to the existence of an $\LS_q[N](t,v-k,v)$, see also \cite{BKOW}.
From the discussion of the derived and residual designs above we obtain the following result.

\begin{cor}
If there exists an $\LS_q[N](t,k,v)$ with $t\ge 1$, then there also exists an $\LS_q[N](t-1,k-1,v-1)$ and an $\LS_q[N](t-1,k,v-1)$.
\end{cor}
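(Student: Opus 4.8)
The plan is to apply the derived and residual constructions memberwise to the given large set and then check that the resulting collections of designs again partition the appropriate Graßmannian. Write the hypothesized large set as $\{(V,\mathcal{B}_i) : i\in\{1,\ldots,N\}\}$, a partition of $\gauss{V}{k}{q}$ into $t$-$(v,k,\lambda)_q$ designs with $\lambda = \gauss{v-t}{k-t}{q}/N$.

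For the derived statement, I would fix a $1$-subspace $U\in\gauss{V}{1}{q}$ and pass to the collection $\{\Der_U(V,\mathcal{B}_i) : i\in\{1,\ldots,N\}\}$ on the $(v-1)$-dimensional quotient $V/U$. By Lemma~\ref{lem:der_res}, each $\Der_U(V,\mathcal{B}_i)$ is a $(t-1)$-$(v-1,k-1,\lambda)_q$ design, which is exactly the common parameter set required for an $\LS_q[N](t-1,k-1,v-1)$. It then remains to verify the partition property. The key observation is that the map $B\mapsto B/U$ is a bijection between the $k$-subspaces of $V$ containing $U$ and the whole Graßmannian $\gauss{V/U}{k-1}{q}$. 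Since the $\mathcal{B}_i$ partition $\gauss{V}{k}{q}$, the blocks containing $U$ are partitioned by the sets $\{B\in\mathcal{B}_i : U\leq B\}$; applying the above bijection shows that the derived block sets $\{B/U : B\in\mathcal{B}_i,\ U\leq B\}$ partition $\gauss{V/U}{k-1}{q}$.

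For the residual statement, I would fix a hyperplane $H\in\gauss{V}{v-1}{q}$ and pass to $\{\Res_H(V,\mathcal{B}_i) : i\in\{1,\ldots,N\}\}$ on the $(v-1)$-dimensional space $H$. By Lemma~\ref{lem:der_res} each of these is a $(t-1)$-$(v-1,k,\mu)_q$ design, so the parameter set is the one needed for an $\LS_q[N](t-1,k,v-1)$. The partition property now uses the trivial bijection identifying the $k$-subspaces of $H$ with the $k$-subspaces of $V$ that are contained in $H$: as the $\mathcal{B}_i$ partition $\gauss{V}{k}{q}$, the blocks contained in $H$ are partitioned by the sets $\{B\in\mathcal{B}_i : B\leq H\}$, and these are precisely the residual block sets viewed inside $H$.

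I do not expect a genuine obstacle here, since both constructions preserve the number $N$ of classes (one class per member of the original large set) and Lemma~\ref{lem:der_res} already supplies the correct design parameters. The only point requiring care is the partition argument, where one must confirm that the relevant correspondence—quotienting by $U$ in the derived case, and the inclusion of $H$-subspaces into $V$ in the residual case—is a bijection onto the full target Graßmannian that carries the induced partition of the containing (resp.\ contained) blocks onto a partition of that Graßmannian. Once this bijection is pinned down, both claims follow immediately. I would also remark that the two statements are dual under $\Der_U(D)^\perp\cong\Res_{U^\perp}(D^\perp)$, so that the residual case could alternatively be deduced from the derived case together with the fact, noted above, that dualizing a large set again yields a large set.
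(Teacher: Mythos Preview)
Your proposal is correct and follows exactly the paper's approach: fix a point $U$ (respectively a hyperplane $H$) and take the derived (respectively residual) designs of the members of the large set to obtain the claimed large sets. Your write-up is simply more explicit about the partition verification than the paper's two-sentence proof, and your closing duality remark is a nice optional shortcut not used there.
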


\begin{proof}
If the given large set is defined on the vector space $V$ then we fix a
$1$-dimensional subspace $U$ to form the derived designs on $V/U$. These
form the claimed $\LS_q[N](t-1,k-1,v-1)$. If we fix a subspace $H$ of
codimension $1$ in $V$ then the residual designs of the given large set
on $H$ form the claimed $\LS_q[N](t-1,k,v-1)$.
\end{proof}

The construction in Theorem \ref{tvtq} can be used for the construction of large sets.

\begin{cor}[{$q$-analog of \cite[Lemma~4]{AN-K-Halvings}}]
\label{cor:ls}
If there exists an $\LS_q[N](t,k-1,v-1)$ and an $\LS_q[N](t,k,v-1)$ then
there exists an $\LS_q[N](t,k,v)$.
\end{cor}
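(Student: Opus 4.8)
The plan is to apply the construction of Theorem~\ref{tvtq} simultaneously, through a single fixed map $\varphi$, to all $N$ designs of the two given large sets, and then to verify that the $N$ resulting reduced designs partition $\gauss{V}{k}{q}$. First I would reindex so that the theorem outputs designs of strength $t$: applying Theorem~\ref{tvtq} to the (a priori only admissible) parameter set $(t+1)$-$(v,k,\lambda)_q$, its derived parameter set is $t$-$(v-1,k-1,\lambda)_q$, its residual parameter set is $t$-$(v-1,k,\mu)_q$ with $\mu = \lambda\cdot\frac{q^{v-k}-1}{q^{k-t}-1}$, and its reduced parameter set is the target $t$-$(v,k,\cdot)_q$. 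Taking $\lambda = \gauss{v-1-t}{k-1-t}{q}/N$, the designs of the given $\LS_q[N](t,k-1,v-1)$ realize the derived parameter set; by the Gaussian binomial recurrence $\gauss{v-1-t}{k-t}{q} = \gauss{v-1-t}{k-1-t}{q}\cdot\frac{q^{v-k}-1}{q^{k-t}-1}$ one checks $\mu = \gauss{v-1-t}{k-t}{q}/N$, so the designs of the given $\LS_q[N](t,k,v-1)$ realize the residual parameter set. Hence the construction of Theorem~\ref{tvtq} is applicable.

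Next I would fix a $(v-1)$-dimensional space $\bar V$, a surjective linear map $\varphi : V\to\bar V$ and $U = \ker\varphi$, and use this same $\varphi$ for every index. Writing the two large sets as $\{(\bar V,\mathcal{B}_{\Der,i})\}$ and $\{(\bar V,\mathcal{B}_{\Res,i})\}$ for $i\in\{1,\ldots,N\}$, I set
\[
\mathcal{B}_{\Red,i} = \{\varphi^{-1}(\bar B) : \bar B\in\mathcal{B}_{\Der,i}\}\cup\{K : K\text{ is a complement of }U\text{ in }\varphi^{-1}(\bar B),\ \bar B\in\mathcal{B}_{\Res,i}\}\text{.}
\]
By Theorem~\ref{tvtq}, each $(V,\mathcal{B}_{\Red,i})$ is a $t$-$(v,k,\lambda_{\Red})_q$ design, so it only remains to show that $\{\mathcal{B}_{\Red,i}\}$ partitions $\gauss{V}{k}{q}$.

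This partition check is the crux, and I would split the $k$-subspaces $B$ of $V$ according to whether $U\leq B$. If $U\leq B$, then $B = \varphi^{-1}(\varphi(B))$ with $\varphi(B)\in\gauss{\bar V}{k-1}{q}$, and $\bar B\mapsto\varphi^{-1}(\bar B)$ is a bijection between $\gauss{\bar V}{k-1}{q}$ and the $k$-subspaces of $V$ containing $U$; since $\{\mathcal{B}_{\Der,i}\}$ partitions $\gauss{\bar V}{k-1}{q}$, such a $B$ lies in exactly one $\mathcal{B}_{\Red,i}$ (via its first part) and in no second part. If $U\nleq B$, then $\varphi|_B$ is injective, $\varphi(B)\in\gauss{\bar V}{k}{q}$ is determined by $B$, and $B$ is a complement of $U$ in $\varphi^{-1}(\varphi(B)) = B + U$; conversely every complement of $U$ in $\varphi^{-1}(\bar B)$ maps under $\varphi$ onto $\bar B$. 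Hence $B$ belongs to the second part of $\mathcal{B}_{\Red,i}$ exactly for the unique $i$ with $\varphi(B)\in\mathcal{B}_{\Res,i}$, which exists and is unique because $\{\mathcal{B}_{\Res,i}\}$ partitions $\gauss{\bar V}{k}{q}$. In both cases $B$ lies in exactly one $\mathcal{B}_{\Red,i}$, so the reduced designs form an $\LS_q[N](t,k,v)$.

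The main obstacle is precisely this last verification that the blocks produced across the $N$ indices neither overlap nor leave gaps. What makes it go through is the use of a single common $\varphi$ for all indices, the clean dichotomy $U\leq B$ versus $U\nleq B$ matching the two parts of the construction, and the fact that $B\mapsto\varphi(B)$ organizes the $U$-avoiding $k$-subspaces into fibers---the complements of $U$ inside each $\varphi^{-1}(\bar B)$---indexed by $\gauss{\bar V}{k}{q}$. Once this structural picture is in place, the partition property of the output is inherited directly from the partition properties of the two input large sets, and no further computation is required beyond the parameter matching already recorded.
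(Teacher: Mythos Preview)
Your proof is correct and follows the same approach as the paper: apply the construction of Theorem~\ref{tvtq} through a single fixed $\varphi$ to each pair of designs in the two input large sets, and note that the outputs partition $\gauss{V}{k}{q}$. The paper leaves the parameter reindexing and the partition verification implicit (writing only ``clearly''), whereas you spell out both; your added detail is accurate and the argument is complete.
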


\begin{proof}
Let $V$ be a $v$-dimensional vector space over $\GF(q)$, $\bar{V}$ a $(v-1)$-dimensional vector space over $\GF(q)$ and $\varphi : V \to \bar{V}$ a surjective $\GF(q)$-linear map.
On $\bar{V}$, let $\{D_{\Der}^{(1)},\ldots,D_{\Der}^{(N)}\}$ be an $\LS_q[N](t,k-1,v-1)$ and $\{D_{\Res}^{(1)},\ldots,D_{\Res}^{(N)}\}$ an $\LS_q[N](t,k,v-1)$.
As in the proof of Theorem~\ref{tvtq}, for any $i\in\{1,\ldots,n\}$ we use the mapping $\varphi$ to combine the two subspace designs $D_{\Der}^{(i)}$ and $D_{\Res}^{(i)}$ into a $t$-$(v,k,\lambda_{\Red})_q$ subspace design $D_{\Red}^{(i)}$ on $V$.
Clearly, the block sets of the designs $D_{\Red}^{(i)}$ form a partition of $\gauss{V}{k}{q}$.
\end{proof}

For $t\ge 2$ no such combinable pairs of large sets have been found so far.
There are $\LS_2[3](2,3,8)$ and $\LS_2[3](2,5,8)$, see \cite{BKOW}.
If an $\LS_2[3](2,4,8)$ could be found then Corollary~\ref{cor:ls} would imply the existence of large sets with the parameters $\LS_2[3](2,4,9)$, $\LS_2[3](2,5,9)$ and $\LS_2[3](2,5,10)$.

\section*{Acknowledgement}
The authors thank  Thomas Feulner for stimulating discussions on this topic.

\end{document}